\theoremstyle{plain}
\newtheorem{theorem}{Theorem}[section]
\newtheorem{lemma}[theorem]{Lemma}
\newtheorem{example}[theorem]{Example}
\theoremstyle{remark}
\newtheorem*{remark}{Remark}
\newtheorem*{remarks}{Remarks}
\newcommand{\CC}{\mathbb{C}}
\newcommand{\RR}{\mathbb{R}}
\newcommand{\ZZ}{\mathbb{Z}}
\newcommand{\cH}{\mathcal{H}}
\newcommand{\cL}{\mathcal{L}}
\newcommand{\cP}{\mathcal{P}}
\renewcommand{\hat}{\widehat}
\begin{document}
	
\title{A Cram\'er--Wold theorem for mixtures}
	
\author[R. Fraiman]{Ricardo Fraiman}
\address{Centro de Matem\'atica, Facultad de Ciencias, Universidad de la Rep\'ublica, Uruguay.} 
\email{rfraiman@cmat.edu.uy} 
	
\author[L. Moreno]{Leonardo Moreno}
\address{Instituto de Estad\'{i}stica, Departamento de M\'etodos Cuantitativos, FCEA, Universidad de la Rep\'ublica, Uruguay.}
\email{mrleo@iesta.edu.uy} 
	
\author[T. Ransford]{Thomas Ransford}
\address{D\'epartement de math\'ematiques et de statistique, Universit\'e Laval, Qu\'ebec City (Qu\'ebec),  Canada G1V 0A6.}
\email{ransford@mat.ulaval.ca}
	
\thanks{Fraiman and Moreno were   supported by grant FCE-3-2022-1-172289, Ag\-encia Nacional de Investigaci\'on e Innovaci\'on, Uruguay. Ransford  was supported by NSERC Discovery Grant RGPIN--2020--04263.}

\begin{abstract}
We show how a Cram\'er--Wold  theorem for a family of multivariate probability
distributions  can be used to generate a similar theorem for mixtures (convex combinations)
of distributions drawn from the same family.
		
Using this abstract result, we establish a Cram\'er--Wold theorem for mixtures of multivariate Gaussian distributions.
According to this theorem, two such mixtures can be distinguished by projecting them onto a certain predetermined finite set of lines, the number of lines depending only on the total number Gaussian distributions  involved and on the ambient dimension. A similar result is also obtained for mixtures of multivariate $t$-distributions.
\end{abstract}
	
\keywords{Projection, identifiability,  mixture, Gaussian distribution,  $t$-distribution}
	
\makeatletter
\@namedef{subjclassname@2020}{\textup{2020} Mathematics Subject Classification}
\makeatother
	
\subjclass[2020]{60B11}
	
\maketitle

\section{Introduction}
	
The Cram\'er--Wold device is the name given to a general
technique for analyzing multivariate probability distributions 
via their lower-dimensional projections. The name originates
from a classical theorem of Cram\'er and Wold \cite{CW36} to the effect that
a probability measure in Euclidean $d$-dimensional space is 
uniquely determined by its one-dimensional projections in all directions.
	
For certain classes of measures, one can do better.
Indeed, in some cases,  just finitely many projections suffice to determine the measure. 
As an example, we cite the
case of elliptic measures in $\RR^d$
(which includes that of Gaussian measures), 
where just $(d^2+d)/2$ suitably chosen  projections suffice (see \cite{FMR23a}).
	
The central object of study in this article is that of \emph{mixtures}, 
namely convex combinations of probability measures. 
For example, suppose that we already have a Cram\'er--Wold device 
for a certain  family of probability measures: 
can we then deduce a similar device for mixtures of measures taken from that family? 
We give an affirmative answer to this question in certain cases, 
including the very important one of Gaussian mixtures. 
	
Here, in more detail, is a road-map of the article.	
	
Section~\ref{S:mixtures} begins with a short introduction to the notion of the Cram\'er--Wold device.
We then establish a rather general abstract result, Theorem~\ref{T:mixture},
which shows how a Cram\'er--Wold theorem 
for a class of measures can be used to generate a similar theorem for mixtures drawn from the same class.
	
To apply Theorem~\ref{T:mixture},
it is necessary to check that certain families of
probability distributions are linearly independent, or equivalently,
that they are identifiable. There is an extensive literature
on techniques for proving identifiability.
In Section~\ref{S:lindep}, we review results concerning the identifiability of 
two particular families: the Gaussian distributions and the $t$-distributions.
We also derive what we believe to be a new  criterion for identifiability,
based on a generalization of Picard's theorem from complex analysis due to Borel.
	
In Section~\ref{S:Gaussian},
these ideas are  applied to the  important
special case of Gaussian measures and Gaussian mixtures. 
In particular, we derive
a Cra\-m\'er--Wold theorem for Gaussian mixtures,
Theorem~\ref{T:gaussmixture}, from the previously known one for Gaussian measures.
It says that, if $P$ and $Q$ are both mixtures of $m$
normal distributions on $\RR^d$,
and if their
one-dimensional projections  coincide 
on a certain pre-determined set of $(2m-1)(d^2+d-2)/2+1$ lines, 
then $P$ and $Q$ are the same. 
A similar result, Theorem~\ref{T:tmixture}, is also established for mixtures of multivariate
$t$-distributions.
	
We conclude in Section~\ref{S:conclusion} with some remarks
about applications of these results.

	
\section{A Cram\'er--Wold theorem for mixtures}\label{S:mixtures}
	
\subsection{Introduction}
	
Given a Borel probability measure $P$ on $\RR^d$ and a vector subspace $H$ of $\RR^d$,
we write $P_H$ for the projection of $P$ onto $H$, namely the Borel probability measure on $H$ given by
\[
P_H(B):=P(\pi_H^{-1}(B)),
\]
where $B$ is an arbitrary Borel subset of $H$
and $\pi_H:\RR^d\to H$ is the orthogonal projection of $\RR^d$ onto $H$.
	
According to a well-known theorem of Cram\'er and Wold \cite{CW36}, 
if $P,Q$ are two Borel probability measures on $\RR^d$, 
and if $P_L=Q_L$ for all lines $L$, then $P=Q$.
	
There have been numerous refinements of the Cram\'er--Wold theorem.
To help describe these, it is convenient to introduce the following terminology.
Let  $\cP$ be a family of Borel probability measures on $\RR^d$ 
and let $\cH$ be a family of vector subspaces of $\RR^d$ (not necessarily all of the same dimension).
We say that $\cH$ is a \emph{Cram\'er--Wold system}  for $\cP$ if, for every pair $P,Q\in\cP$,
\[
P_H=Q_H ~(\forall H\in \cH)\quad\Rightarrow\quad P=Q.
\]
In this terminology, the original Cram\'er--Wold theorem says 
simply that the family of all lines in $\RR^d$ is a Cram\'er--Wold
system for the family of  Borel probability measures on $\RR^d$.
Here are some other examples.
	
\begin{itemize}
\item If $\cP$ is the family of compactly supported Borel probability measures on $\RR^2$,
then any infinite set  of lines in $\RR^2$ is a Cram\'er--Wold system for $\cP$ 
(R\'enyi \cite[Theorem~1]{Re52}).
		
\item If $\cP$ is the family of probability measures on $\RR^d$
whose supports contain at most $k$ points, and if 
$\cH=\{H_1,\dots,H_{k+1}\}$ is any family of $k+1$ subspaces of  $\RR^d$ such that 
$H_i^\perp\cap H_j^\perp=\{0\}$ whenever $i\ne j$,
then $\cH$ is a Cram\'er--Wold system for $\cP$
(Heppes, \cite[Theorem~$1'$]{He56}).
		
\item If $\cP$ is the family of probability measures $P$ on $\RR^d$
whose moments $m_n:=\int_{\RR^d}\|x\|^n\,dP(x)$ are finite and satisfy $\sum_n m_n^{-1/n}=\infty$, 
and if $\cH$ is any family of subspaces such that 
$\cup_{H\in\cH}H$ has positive Lebesgue measure in $\RR^d$,
then $\cH$ is a Cram\'er--Wold system for $\cP$ (Cuesta-Albertos \textit{et al} \cite[Corollary~3.2]{CFR07}).
		
\item If $\cP$ is the family of elliptical distributions on $\RR^d$ and if
$\cL$ is the set of  lines  $\langle e_i+e_j\rangle ~(1\le i\le j\le d)$,
where $\{e_1,\dots,e_d\}$ is any basis of $\RR^d$,
then $\cL$ is a Cram\'er--Wold system for $\cP$
(Fraiman \textit{et al}, \cite[Theorem~1]{FMR23a}).
		
\end{itemize}

Further results of this kind may be found in \cite{BMR97, FMR23b, FMR24, Gi55,  GJ20}.
	
	
\subsection{Mixtures}
	
Let $d\ge1$ and let $\cP$ be a family of Borel probability measures on $\RR^d$.
	
We say that $\cP$ is \emph{linearly dependent} if there exist distinct  $P_1,\dots,P_n\in\cP$
and non-zero  $\lambda_1,\dots,\lambda_n\in\RR$ such that $\sum_{j=1}^n\lambda_j P_j=0$.
Otherwise $\cP$ is \emph{linearly independent}.
	
A \emph{$\cP$-mixture}  is a  convex combination of measures from $\cP$, 
in other words, a measure of the form $\sum_{j=1}^n\lambda_jP_j$, 
where $P_1,\dots,P_n\in\cP$ and $\lambda_1,\dots,\lambda_n\ge0$ 
with $\sum_{j=1}^n\lambda_j=1$.
	
Our aim in this section is to establish the following Cram\'er--Wold  theorem for $\cP$-mixtures.
	
\begin{theorem}\label{T:mixture}
Let $\cP$ be a family of Borel probability measures on $\RR^d$,
let $\cH$ be a collection of vector subspaces of $\RR^d$, and let $m\ge1$.
Suppose that:
\begin{enumerate}[\normalfont\rm(i)]
\item\label{i:1} 
For each $H\in\cH$, the distinct measures in the set
$\{P_H:P\in\cP\}$ are linearly independent.
\item\label{i:2} 
Each partition of $\cH$ into $2m-1$ subsets contains at least one Cram\'er--Wold system for $\cP$.
\end{enumerate}
Let $P$ and $Q$ each be convex combinations of $m$ measures from $\cP$.
If $P_H=Q_H$ for all $H\in\cH$, then $P=Q$.
\end{theorem}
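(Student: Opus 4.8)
The plan is to argue by contradiction: assume $P\neq Q$ while $P_H=Q_H$ for every $H\in\cH$, and manufacture from this a partition of $\cH$ into $2m-1$ subsets none of which is a Cram\'er--Wold system, contradicting hypothesis~\eqref{i:2}. First I would write the given convex combinations as $P=\sum_i\alpha_iP_i$ and $Q=\sum_j\beta_jQ_j$ and form the signed measure $\mu:=P-Q$. Collecting equal terms gives $\mu=\sum_{k=1}^N\gamma_kR_k$ with the $R_k\in\cP$ distinct, each $\gamma_k\neq0$, and $N\le 2m$. Since $P$ and $Q$ are probability measures, $\sum_k\gamma_k=\mu(\RR^d)=0$; as $\mu\neq0$, both $A:=\{k:\gamma_k>0\}$ and $B:=\{k:\gamma_k<0\}$ are nonempty, and since every index of $B$ names a measure occurring in $Q$, we have $|B|\le m$. (One also checks $\cH\neq\emptyset$: otherwise the empty system would be forced to be a Cram\'er--Wold system, which fails once $\cP$ has two elements.)

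The key local observation uses hypothesis~\eqref{i:1}. Fix $H\in\cH$; from $\mu_H=0$ we get $\sum_k\gamma_k(R_k)_H=0$. Grouping the indices $k$ by the value of $(R_k)_H$ and invoking the linear independence of the distinct members of $\{P_H:P\in\cP\}$, the $\gamma$-sum over each group must vanish. In particular no group is a singleton, and --- this is the crucial point --- if we fix once and for all an index $a_0\in A$, then the group containing $a_0$ has vanishing $\gamma$-sum while $\gamma_{a_0}>0$, so it must also contain some $b\in B$. For that $b$ we have $(R_{a_0})_H=(R_b)_H$; equivalently, writing $\cH_{k,l}:=\{H\in\cH:(R_k)_H=(R_l)_H\}$, we have shown $H\in\cH_{a_0,b}$.

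This yields the covering $\cH\subseteq\bigcup_{b\in B}\cH_{a_0,b}$ by at most $|B|\le m$ sets. I would then assign to each $H$ one such $b=b(H)$ and take the nonempty sets among $\{H:b(H)=b\}$, $b\in B$, as the classes of a partition of $\cH$ into at most $m\;(\le 2m-1)$ pieces. Each piece lies in some $\cH_{a_0,b}$ and so fails to distinguish the two \emph{distinct} measures $R_{a_0},R_b\in\cP$; hence no piece can be a Cram\'er--Wold system for $\cP$. Refining arbitrarily (splitting a class, or appending empty classes) turns this into a partition into exactly $2m-1$ subsets with still no Cram\'er--Wold class, since any subset of a non-distinguishing class is again non-distinguishing and the empty set is never a Cram\'er--Wold system. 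This contradicts~\eqref{i:2}, so $\mu=0$ and $P=Q$.

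The step I expect to be the main obstacle is the ``crucial point'' above: extracting, for \emph{every} $H$ simultaneously, a colliding pair drawn from one small fixed family. A priori each $H$ only guarantees some unspecified collision among the $N\le 2m$ measures, and a naive count would allow up to $\binom{N}{2}$ distinct pairs, far more than $2m-1$. The device that tames this is the sign/zero-sum structure of $\mu$: anchoring at a fixed positive index $a_0$ forces its partner to lie in the $m$-element set $B$, so all of $\cH$ is covered by the ``star'' of pairs $\{a_0,b\}$, $b\in B$. (In fact this yields the slightly stronger statement with $m$ in place of $2m-1$.) Before finalizing I would double-check the boundary cases $\cH=\emptyset$ and $|\cP|\le1$, and confirm that hypothesis~\eqref{i:1} is precisely what licenses the vanishing of each group sum.
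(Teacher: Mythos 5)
Your proof is correct, and its key step is genuinely sharper than the paper's. Both arguments run by contradiction: write $P-Q=\sum_k\gamma_k R_k$ with distinct $R_k\in\cP$ and nonzero coefficients, introduce the collision sets $\cH_{k,l}:=\{H\in\cH:(R_k)_H=(R_l)_H\}$ (never Cram\'er--Wold systems, since $R_k\ne R_l$), and contradict hypothesis~(ii) via a forbidden partition. The paper anchors at an arbitrary index and then makes a case distinction: either the $2m-1$ (or fewer) sets $\cH_{1,j}$, $j=2,\dots,k$, cover $\cH$, which gives the forbidden partition, or some $H_0$ escapes them all, and then hypothesis~(i) is invoked exactly once, at $H_0$, to contradict $\sum_j\lambda_j(P_j)_{H_0}=0$. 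You instead invoke (i) at \emph{every} $H$: grouping indices by equal projections, linear independence forces each group's coefficient sum to vanish, so the group of a fixed positive-coefficient anchor $a_0$ always contains some negative-coefficient index $b\in B$; since negative net coefficients can only arise from components of $Q$, $|B|\le m$, and the covering $\cH=\bigcup_{b\in B}\cH_{a_0,b}$ holds unconditionally --- no case distinction, and only $m$ collision sets rather than $2m-1$. This buys a genuine quantitative strengthening: hypothesis~(ii) need only be assumed for partitions into $m$ subsets, which, propagated through the proofs of Theorems~\ref{T:gaussmixture} and~\ref{T:tmixture}, would lower the number of required lines from $(2m-1)(d^2+d-2)/2+1$ to $m(d^2+d-2)/2+1$. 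What the paper's version buys in exchange is robustness: it never uses the sign/zero-sum structure of $P-Q$ (i.e.\ that $P$ and $Q$ are probability measures), only that the difference is a combination of at most $2m$ distinct members of $\cP$. Your handling of the padding step (empty classes are not Cram\'er--Wold systems once $\cP$ contains the two distinct measures $R_{a_0}$, $R_b$) and of the degenerate cases is also sound; the paper glosses over the same padding when it passes from a partition into $k-1$ sets to one into $2m-1$ sets.
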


\begin{remark}
The statement of Theorem~\ref{T:mixture} immediately
raises the question of how to verify the hypotheses (i) and~(ii).
Hypothesis~(i) will be treated in detail Section~\ref{S:lindep}.
Hypothesis~(ii) will be addressed (at least in the special case of 
Gaussian and $t$-distributions) in Section~\ref{S:Gaussian}.
\end{remark}
	
\begin{proof}[Proof of Theorem~\ref{T:mixture}]
We argue by contradiction. 
Suppose that $P_H=Q_H$ for all $H\in\cH$, but that $P\ne Q$. 
Then the difference $P-Q$ can be written as 
$P-Q=\sum_{j=1}^k\lambda_j P_j$, 
where $P_1,\dots,P_k\in\cP$ are distinct,  
where $\lambda_1,\dots,\lambda_k\in\RR$  are non-zero,
and where $k\le 2m$. Set
\[
\cH_j:=\{H\in\cH: (P_{1})_H= (P_{j})_H\} \quad(j=2,\dots,k).
\]
Clearly no $\cH_j$ is a Cram\'er--Wold system for $\cP$.
If we had $\cup_{j=2}^k \cH_j=\cH$, then we could construct a partition
of $\cH$ into $k-1$ sets (perhaps some of them empty) none of which is
a Cram\'er--Wold system for $\cP$, contradicting the assumption~\eqref{i:2}
on $\cH$.
We conclude that $\cup_{j=2}^k \cH_j\ne\cH$, so there exists an $H_0\in\cH$
such that $(P_{1})_{H_0}\ne (P_{j})_{H_0}~(j=2,\dots,k)$.
		
By assumption~\eqref{i:1},
the distinct measures in the set $\{(P_j)_{H_0}:j=1,\dots k\}$
are linearly independent. In particular, since $(P_{1})_{H_0}\ne (P_{j})_{H_0}~(j=2,\dots,k)$,
it follows that $(P_{1})_{H_0}$ is not in the span of $\{(P_j)_{H_0}:j=2,\dots d\}$. 
On the other hand, we have
\[
\sum_{j=1}^k\lambda_j(P_j)_{H_0}
=\Bigl(\sum_{j=1}^k\lambda_j P_j\Bigr)_{H_0}=(P-Q)_{H_0}=P_{H_0}-Q_{H_0}=0.
\]
Since $\lambda_1\ne0$, this is a contradiction.
\end{proof}

	
\section{Linear independence and identifiability}\label{S:lindep}
		
\subsection{Introduction}

Condition \eqref{i:1} in Theorem~\ref{T:mixture} 
begs the question as to how one determines 
whether a set of measures on a subspace $H$ of $\RR^d$ is linearly independent. 
		
In the statistical literature, the notion of linear independence of measures
is synonymous with that of identifiability.
We say that a family $\cP$ of Borel probability measures on $\RR^d$ is \emph{identifiable}
if it is impossible to express any $\cP$-mixture as two different convex
combinations of elements of $\cP$.
It is easy to see that $\cP$ is identifiable if and only if it
is linearly independent (see e.g.\ \cite[Theorem~3.1.1]{TSM85}).
There is an extensive literature concerning techniques for proving identifiability/linear independence.
A useful background reference is the book \cite{TSM85}.

\subsection{Linear independence of Gaussian and $t$-distributions}
	
We shall need two results in particular. The first concerns the family 
of Gaussian distributions, with densities
\begin{equation}\label{E:Gaussdf}
f_{\mu,\sigma}(x)=\frac{1}{\sqrt{2\pi\sigma^2}}\exp\Bigl(-\frac{(x-\mu)^2}{2\sigma^2}\Bigr) \quad(x\in\RR).
\end{equation}
		
\begin{theorem}\label{T:normalindep}
The family of Gaussian distributions $\{f_{\mu,\sigma}:\mu\in\RR,\sigma>0\}$ on $\RR$ is 
linearly independent.
\end{theorem}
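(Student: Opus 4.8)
The plan is to reduce the problem to an asymptotic comparison of densities at $+\infty$. Suppose, for contradiction, that there were a nontrivial relation $\sum_{j=1}^n \lambda_j f_{\mu_j,\sigma_j}\equiv 0$ on $\RR$, with the pairs $(\mu_j,\sigma_j)$ pairwise distinct and every $\lambda_j\ne0$. Since each $f_{\mu_j,\sigma_j}$ is continuous, this identity holds at every point, so I am free to study its behaviour as $x\to+\infty$.

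The key observation is that these densities can be totally ordered by their rate of decay at $+\infty$. Expanding the exponent of $f_{\mu,\sigma}(x)$ gives $-x^2/(2\sigma^2)+\mu x/\sigma^2-\mu^2/(2\sigma^2)$, so, as $x\to+\infty$, a density with larger $\sigma$ decays strictly more slowly, and among densities sharing the same $\sigma$, the one with larger $\mu$ decays more slowly. I would therefore single out the dominant index $j_0$ as the one whose pair $(\sigma_{j_0},\mu_{j_0})$ is lexicographically largest (largest $\sigma$, and then largest $\mu$). Because the pairs are distinct, this dominant index is unique.

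The extraction step is then to divide the relation by $f_{\mu_{j_0},\sigma_{j_0}}$ and let $x\to+\infty$. A direct computation of the ratio $f_{\mu_j,\sigma_j}(x)/f_{\mu_{j_0},\sigma_{j_0}}(x)$ shows that for every $j\ne j_0$ its exponent tends to $-\infty$: if $\sigma_j<\sigma_{j_0}$ the quadratic term $\tfrac{x^2}{2}(\sigma_{j_0}^{-2}-\sigma_j^{-2})$ dominates, while if $\sigma_j=\sigma_{j_0}$ and $\mu_j<\mu_{j_0}$ the linear term $(\mu_j-\mu_{j_0})x/\sigma_{j_0}^2$ does. Consequently each such ratio tends to $0$, and the divided identity collapses in the limit to $\lambda_{j_0}=0$, contradicting $\lambda_{j_0}\ne0$.

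I expect the only real work to lie in the asymptotic bookkeeping of the ordering step: one must check that the lexicographic order on $(\sigma,\mu)$ genuinely governs the decay and that distinctness of the pairs guarantees a unique dominant term (two densities agreeing in both $\mu$ and $\sigma$ are identical). Everything else is elementary. As an alternative that avoids asymptotics on the real line, one could instead transfer the relation to the moment generating functions $M_{\mu_j,\sigma_j}(s)=\exp(\mu_j s+\tfrac12\sigma_j^2 s^2)$, which are finite for all $s\in\RR$ and inherit the same linear relation, and then peel off the lexicographically leading exponential as $s\to+\infty$ in exactly the same manner.
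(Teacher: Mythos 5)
Your proof is correct, but it is genuinely different from what the paper does: the paper does not prove Theorem~\ref{T:normalindep} at all, it simply cites the literature (\cite{YS68} and \cite[Example 3.1.4]{TSM85}). Your argument---order the pairs $(\sigma_j,\mu_j)$ lexicographically, divide the assumed relation by the dominant density, and let $x\to+\infty$ to kill all other ratios and force $\lambda_{j_0}=0$---is a complete and self-contained proof; the case analysis (quadratic term dominates when $\sigma_j<\sigma_{j_0}$, linear term when $\sigma_j=\sigma_{j_0}$ and $\mu_j<\mu_{j_0}$) is exactly right, and the distinctness of the pairs does guarantee a unique dominant index. In fact, this domination-and-ordering technique is essentially the classical method behind the cited results, and it is precisely the kind of argument abstracted by Teicher \cite[Theorem~2]{Te63}, which the paper mentions in Section~\ref{S:lindep}; your moment-generating-function variant (which needs only the one-line justification that integrating $e^{sx}$ against the vanishing signed density $\sum_j\lambda_jf_{\mu_j,\sigma_j}$ is legitimate because each Gaussian MGF is finite everywhere) is the form in which it usually appears there. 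The paper also offers a second, quite different proof as a remark after Theorem~\ref{T:lindep}: since the Gaussian characteristic functions $\phi(\xi)=\exp(i\mu\xi-\sigma^2\xi^2/2)$ extend to nowhere-vanishing entire functions on $\CC$, linear independence follows from Borel's generalization of Picard's theorem. Your approach buys elementarity and self-containedness; the Borel-theorem approach buys generality (it handles any family whose characteristic functions are zero-free entire functions, and extends automatically to affine images), at the cost of invoking a deep result from complex analysis.
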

		
\begin{proof}
This result is well known. 
An early reference is  \cite[Proposition~2]{YS68}. See also  \cite[Example 3.1.4]{TSM85}.
\end{proof}
		
The second result treats the identifiability/linear independence of
the family of $t$-distributions. Recall that 
a \emph{$t$-distribution} on $\RR$ with $\nu$ degrees of freedom is a Borel measure with density of the form
\[
f_{\nu,\mu,\sigma}(x)=c_{\nu,\mu,\sigma}\Bigl(1+\frac{(x-\mu)^2}{\nu \sigma^2}\Bigr)^{-(\nu+1)/2},
\]
where $\nu$ is a positive integer, $\mu\in\RR$ and $\sigma>0$.
The constant $c_{\nu,\mu,\sigma}$ is chosen to ensure that 
$\int_\RR f_{\nu,\mu,\sigma}(x)\,dx=1$. 
This  distribution has mean $\mu$ (if $\nu>1$)
and variance $\sigma^2\nu/(\nu-2)$ (if $\nu>2$).
		
\begin{theorem}\label{T:tlindep}
The family of $t$-distributions $\{f_{\nu,\mu,\sigma}:\nu\in\ZZ^+, \mu\in\RR,\sigma>0\}$
is linearly independent.
\end{theorem}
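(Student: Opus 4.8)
The plan is to complexify and read off the parameters from the singularities of the densities. Writing $b:=\sqrt{\nu}\,\sigma$, one has $1+(z-\mu)^2/(\nu\sigma^2)=b^{-2}(z-\mu-ib)(z-\mu+ib)$, so the analytic continuation of $f_{\nu,\mu,\sigma}$ to the complex plane has its only singularities at the conjugate pair $\mu\pm ib$, and near the upper one it behaves like a nonzero constant times $(z-\mu-ib)^{-(\nu+1)/2}$. Thus the \emph{location} $\mu\pm ib$ records $\mu$ and the product $\sqrt{\nu}\,\sigma$, while the \emph{order} $(\nu+1)/2$ records $\nu$; my strategy is to isolate a dominant singularity and show it cannot be cancelled.

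Suppose for contradiction that $\sum_{j=1}^n\lambda_j f_j\equiv0$ on $\RR$, where $f_j:=f_{\nu_j,\mu_j,\sigma_j}$, the triples $(\nu_j,\mu_j,\sigma_j)$ are distinct, and every $\lambda_j\ne0$; put $b_j:=\sqrt{\nu_j}\,\sigma_j$. Cutting the plane along the vertical rays from each $\mu_j\pm ib_j$ to $\pm i\infty$, every $f_j$ becomes single-valued and analytic on the resulting slit plane $U$. Since $U$ is connected and contains $\RR$, the identity theorem extends the relation $\sum_j\lambda_j f_j\equiv0$ from $\RR$ to all of $U$.

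Now let $b^*:=\min_j b_j$, fix an index attaining it, and set $z_0:=a+ib^*$ with $a$ the corresponding $\mu_j$. The vertical segment approaching $z_0$ from below lies in $U$ (no cut can reach $z_0$ from underneath, precisely because $b^*$ is minimal), so $z_0$ is accessible from within the region where the relation holds. Let $J_0:=\{j:\mu_j=a,\ b_j=b^*\}$ collect the indices whose singularity sits at $z_0$; for $j\notin J_0$ the function $f_j$ is analytic at $z_0$, so $g:=-\sum_{j\notin J_0}\lambda_j f_j$ is bounded near $z_0$ while $\sum_{j\in J_0}\lambda_j f_j=g$ there. The crucial observation is that the $\nu_j$ with $j\in J_0$ are pairwise distinct: once $\mu_j=a$ and $\sqrt{\nu_j}\,\sigma_j=b^*$ are fixed, the triple is determined by $\nu_j$ alone. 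Letting $\nu^*:=\max_{j\in J_0}\nu_j$, attained at a unique index $j^*$, the local behavior above shows that $|f_{j^*}|$ grows strictly faster than every other $|f_j|$, $j\in J_0$, as $z\to z_0$ along the segment, so $\bigl|\sum_{j\in J_0}\lambda_j f_j(z)\bigr|\to\infty$. This contradicts the boundedness of $g$, and the contradiction proves linear independence.

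I expect the main difficulty to be the bookkeeping when several members share the singular location $z_0$, that is, when $J_0$ is not a singleton: there one must genuinely compare the orders $(\nu_j+1)/2$ rather than merely the positions of the singularities, and must pin down the branch-point case (half-integer exponent, occurring for even $\nu_j$) by fixing the cut so that the modulus asymptotics near $z_0$ are unambiguous. Once the slit plane $U$ and the accessible extremal singularity $z_0$ are in place, the remaining estimates are routine.
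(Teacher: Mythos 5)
Your proposal is correct, but it is a genuinely different route from the paper's, which offers no self-contained argument at all: it simply cites Holzmann--Munk--Gneiting \cite{HMG06}, a reference in the Teicher-style identifiability literature where one orders the family and exhibits a dominant term. Your argument is in the same spirit (isolate a dominant term and show it cannot cancel), but you perform the ordering at the \emph{complex singularities} of the analytically continued densities rather than via real-line asymptotics, and this is a clean and attractive choice for the $t$-family: on $\RR$ all densities with the same $\nu$ share the same leading tail order $|x|^{-(\nu+1)}$, so a real-tail ordering must dig into lower-order terms, whereas your complexification separates the three parameters at one stroke --- the singularity location $\mu\pm i\sqrt{\nu}\,\sigma$ pins down $\mu$ and $\sqrt{\nu}\,\sigma$, and the exponent $(\nu+1)/2$ pins down $\nu$ (this also shows, as a by-product, that distinct triples give distinct measures, so the two formulations of linear independence agree). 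The delicate points all check out: the slit plane $U$ obtained by removing the finitely many closed vertical rays is open, connected, and contains $\RR$, so the identity theorem applies; each $f_j$ is single-valued on $U$ because $1+(z-\mu_j)^2/b_j^2$ takes values in $(-\infty,0]$ exactly on those rays, so the principal branch of the logarithm is available; minimality of $b^*$ guarantees the vertical approach segment to $z_0$ avoids all cuts and that every $f_j$ with $j\notin J_0$ is analytic in a full neighborhood of $z_0$; within $J_0$ the $\nu_j$ are forced to be pairwise distinct; and since the exponents are real, $|w^{-s}|=|w|^{-s}$ for any branch, so the modulus asymptotics $|f_j(z_0-it)|\asymp t^{-(\nu_j+1)/2}$ are branch-independent, and the triangle inequality gives the blow-up that contradicts boundedness. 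It is also worth noting that the paper's own general criterion (Theorem~\ref{T:lindep}, via Borel's theorem) cannot be used here, since $t$-distributions have heavy tails and their characteristic functions do not extend to entire functions; your argument fills exactly the gap that forced the authors to fall back on a citation.
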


\begin{proof}
See for example \cite[Section~3, Example~1]{HMG06}.
\end{proof}

		
\subsection{Linear independence of measures via Borel's theorem}
Theorems~\ref{T:normalindep} and \ref{T:tlindep} above are just two examples of a 
large class of theorems asserting the identifiability of various families of distributions.
The proofs of many of these results are very similar, 
and there is even an abstract
formulation of the general method due to Teicher \cite[Theorem~2]{Te63}.

In this subsection we present
a further general result on identifiability/linear independence,
based on a completely different principle,
namely a generalization of Picard's theorem from complex analysis due to Borel \cite{Bo1897}.
We have not seen this theorem used elsewhere in the statistical literature,
and  we believe that the following application may be of interest.
		
\begin{theorem}\label{T:lindep}
Let $\cP$ be a family of Borel probability measures on $\RR^d$.
Suppose that the characteristic function of every measure in $\cP$  
is the restriction to $\RR^d$ of a nowhere-vanishing holomorphic function on $\CC^d$.
Then $\cP$ is linearly independent.
\end{theorem}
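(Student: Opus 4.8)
The plan is to argue by contradiction, converting a linear relation among the measures into one among their characteristic functions and then invoking Borel's theorem. Suppose $\cP$ were linearly dependent, so that $\sum_{j=1}^n\lambda_jP_j=0$ for distinct $P_1,\dots,P_n\in\cP$ and nonzero real $\lambda_1,\dots,\lambda_n$. Writing $\hat{P}_j$ for the characteristic function of $P_j$, this relation reads $\sum_{j=1}^n\lambda_j\hat{P}_j=0$ on $\RR^d$. By hypothesis each $\hat{P}_j$ is the restriction to $\RR^d$ of a nowhere-vanishing holomorphic function $F_j$ on $\CC^d$; and since a holomorphic function on $\CC^d$ that vanishes on $\RR^d$ vanishes identically (apply the one-variable identity theorem in each coordinate in turn), the relation $\sum_j\lambda_jF_j=0$ persists on all of $\CC^d$.

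Next I would take logarithms. Because $\CC^d$ is simply connected and each $F_j$ is holomorphic and nowhere vanishing, there is a holomorphic $g_j$ on $\CC^d$ with $F_j=e^{g_j}$, so that $\sum_{j=1}^n\lambda_je^{g_j}=0$ on $\CC^d$. I then split into two cases according to whether some difference $g_j-g_k$ (with $j\ne k$) is constant. If $g_j-g_k\equiv c$ for some pair, then $F_j=e^cF_k$, and evaluating at the origin, where every characteristic function takes the value $1$, forces $e^c=1$; hence $\hat{P}_j=\hat{P}_k$ and so $P_j=P_k$ by the uniqueness of characteristic functions, contradicting the distinctness of the $P_j$. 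This is precisely the step where the total mass being $1$ is used.

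In the remaining case no $g_j-g_k$ is constant, and I would reduce to one complex variable and apply Borel's theorem \cite{Bo1897}, which asserts that if $h_1,\dots,h_n$ are entire functions with $h_j-h_k$ nonconstant for all $j\ne k$, then $e^{h_1},\dots,e^{h_n}$ are linearly independent over $\CC$. To carry out the reduction, I restrict the relation to a complex line $t\mapsto tw$ $(t\in\CC)$, obtaining entire functions $G_j(t):=g_j(tw)$ with $\sum_j\lambda_je^{G_j}\equiv0$. For Borel's theorem to deliver the contradiction I must choose the direction $w$ so that each restricted difference $G_j-G_k$ stays nonconstant, and this is the main technical point. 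The key observation is that, for a fixed nonconstant entire $h$ on $\CC^d$, the set of directions $w$ along which $t\mapsto h(tw)$ is constant is contained in the zero set of the lowest-order nonzero homogeneous term of $h$ at the origin, hence is a proper algebraic subvariety of $\CC^d$ and in particular has measure zero. Since there are only finitely many pairs $(j,k)$, a generic $w$ simultaneously avoids all the corresponding bad sets, and for such $w$ every $G_j-G_k$ is nonconstant. Borel's theorem then forces $\lambda_1=\cdots=\lambda_n=0$, contradicting the assumption that the $\lambda_j$ are nonzero. The only genuine obstacle is this passage from the several-variable relation to a one-variable one; once an admissible line is secured, Borel's theorem finishes the argument.
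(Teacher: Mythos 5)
Your proof is correct, and its overall skeleton coincides with the paper's: argue by contradiction, extend the relation $\sum_{j}\lambda_j\hat{P}_j=0$ holomorphically from $\RR^d$ to $\CC^d$ via the identity principle, represent each nowhere-vanishing extension as an exponential $e^{g_j}$, use Borel's theorem to produce a pair $j\ne k$ with $g_j-g_k$ constant, and then use $\hat{P}_j(0)=\hat{P}_k(0)=1$ together with the uniqueness theorem for characteristic functions to contradict the distinctness of the $P_j$. The genuine difference lies in the analytic input: the paper invokes Green's several-variable version of Borel's theorem directly (Lemma~\ref{L:lindep}: if $\exp(g_1)+\cdots+\exp(g_n)\equiv0$ on $\CC^d$, then some $g_j-g_k$ is constant), whereas you assume only the classical one-variable theorem and supply the several-to-one-variable reduction yourself, by restricting to a generic complex line $t\mapsto tw$. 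Your reduction is sound: writing $h=g_j-g_k=\sum_{m\ge0}h_m$ in homogeneous parts, the restriction $t\mapsto h(tw)$ is constant exactly when $h_m(w)=0$ for every $m\ge1$, so the bad directions for each pair lie in the zero set of the lowest-degree nonvanishing $h_m$ with $m\ge1$, a proper algebraic subvariety of $\CC^d$ and hence of measure zero; a direction avoiding the finitely many bad sets therefore exists, and Borel's one-variable theorem finishes the argument. One small correction is needed in your phrasing: the bad set is controlled by the lowest-order nonzero homogeneous term of \emph{positive degree} (equivalently, of $h-h(0)$), since $g_j-g_k$ need not vanish at the origin, and if $h(0)\ne0$ the literal lowest-order term is a nonzero constant whose zero set is empty, making the containment claim false as stated (e.g.\ $h(z)=1+z_1$). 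With that one-word repair, your route buys self-containedness --- only Borel's original theorem is required, not Green's extension --- at the price of the line-restriction argument; the paper's route is shorter because the several-variable statement is taken off the shelf.
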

		
\begin{remarks}
(i) This result immediately yields  an alternative proof of Theorem~\ref{T:normalindep},
since the characteristic functions of the Gaussian distributions
\eqref{E:Gaussdf} have the form $\phi(\xi)=\exp(i\mu \xi-\sigma^2 \xi^2/2)$.
			
(ii) There is a well-known criterion for the characteristic function of $P$  to be the restriction to $\RR^d$
of a holomorphic function on $\CC^d$: this happens if and only if the moments $m_n:=\int_{\RR^d}\|x\|^n\, dP(x)$
are finite and satisfy  $m_n^{1/n}=o(n)$ as $n\to\infty$ (see e.g.\ \cite[Theorem~4.2.2]{LR79}).
			
(iii) Suppose that characteristic function of $P$ is the restriction to $\RR^d$
of a holomorphic function $f$ on $\CC^d$. A sufficient condition for $f$ to be zero-free on $\CC^d$ is that $P$
be infinitely divisible. This follows from \cite[Theorem~3.1]{HS78} and \cite[Theorem~4]{LS52}.
			
(iv) The following simple example shows that 
the nowhere-vanishing condition in Theorem~\ref{T:lindep} cannot be omitted.
Let $P_0$ and $P_1$ be the Dirac measures on $\RR$ concentrated at $0$ and $1$ respectively, 
and let $P_2:=(1/3)P_0+(2/3)P_1$. 
The characteristic functions are the $P_j$ are given by 
$\phi_{P_0}(\xi)=1$ and $\phi_{P_1}(\xi)=e^{i\xi}$ and $\phi_{P_2}=(1/3)(1+2e^{i\xi})$.
All three are nowhere-zero on $\RR$ and
all three extend to be holomorphic on $\CC$.
However,  the set $\{P_0,P_1,P_2\}$ is clearly linearly dependent. 
Theorem~\ref{T:lindep} does not apply in this case, because the holomorphic extension of $\phi_{P_2}$
has zeros in $\CC$ (namely at $i\log2 +(2n+1)\pi$ for each integer~$n$).
\end{remarks}
		
As mentioned above, the proof of Theorem~\ref{T:lindep} is based on a theorem of Borel. 
The precise version that we need is due to Green \cite[p.98]{Gr72}:
		
\begin{lemma}\label{L:lindep}
Let $g_1,\dots,g_n:\CC^d\to\CC$ be holomorphic functions such that
\[
\exp(g_1)+\cdots+\exp(g_n)\equiv0.
\]
Then, for some distinct $j,k$, the function $g_j-g_k$ is constant.
\end{lemma}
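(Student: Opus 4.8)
\textbf{Reduction to one variable.} The plan is to argue by contraposition: assume that every difference $g_j-g_k$ ($j\ne k$) is non-constant on $\CC^d$, and deduce that $\exp(g_1)+\cdots+\exp(g_n)\not\equiv0$. The key observation is that a non-constant holomorphic function on $\CC^d$ remains non-constant on a generic complex line, so I would first pass to a single line carrying all the relevant information. For a direction $b\in\CC^d$ write $D_b=\sum_\alpha b_\alpha\partial_\alpha$ for the associated directional derivative. Since each $g_j-g_k$ is non-constant, the map $b\mapsto D_b(g_j-g_k)$ is a non-zero linear map, so $\{b:D_b(g_j-g_k)\equiv0\}$ is a proper linear subspace of $\CC^d$; I may therefore fix a single $b$ outside the finite union of these subspaces, making every $D_b(g_j-g_k)$ a non-zero holomorphic function. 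For such a fixed $b$, the set of base-points $a$ whose whole line $t\mapsto a+tb$ lies inside some zero set $\{D_b(g_j-g_k)=0\}$ is a proper analytic subset (being the common zero set of the iterated derivatives $D_b^{\,m}(g_j-g_k)$). Choosing $a$ outside it yields a line $\ell$ along which every $(g_j-g_k)\circ\ell$ is a non-constant \emph{entire} function of one variable, while $\sum_j\exp(g_j\circ\ell)$ is the restriction of $\sum_j\exp(g_j)$. It therefore suffices to treat the case $d=1$; this reduction is, I believe, the genuinely new content (the one-variable statement being Borel's classical theorem).

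\textbf{Reformulation in one variable.} Dividing the hypothetical identity $\sum_{j=1}^n e^{g_j}\equiv0$ by the zero-free function $e^{g_n}$ converts it into a non-trivial linear relation among the zero-free entire functions $e^{g_1-g_n},\dots,e^{g_{n-1}-g_n},1$, whose pairwise ratios are precisely the non-constant functions $e^{g_j-g_k}$. Hence it is enough to prove that zero-free entire functions on $\CC$ with pairwise non-constant ratios are linearly independent over $\CC$. I would pass to a minimal vanishing relation and absorb its coefficients, obtaining functions $u_j=e^{w_j}$ with $\sum_{j=1}^{q}u_j\equiv0$, the functions $u_1,\dots,u_{q-1}$ independent and each $w_i-w_j$ non-constant. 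Dividing by $u_q$ gives $\sum_{j=1}^{q-1}v_j\equiv-1$, where $v_j=e^{w_j-w_q}$ is non-constant and $v_1,\dots,v_{q-1}$ are linearly independent (the case $q=2$ being immediate, since then $v_1\equiv-1$ contradicts non-constancy).

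\textbf{The Wronskian core.} Differentiating $\sum_{j=1}^{q-1}v_j\equiv-1$ repeatedly shows that $(1,\dots,1)$ lies in the kernel of the matrix $\bigl(v_j^{(k)}\bigr)_{1\le k,j\le q-1}$, so its determinant vanishes identically. Factoring $v_j$ out of the $j$-th column rewrites this determinant as $\prod_j v_j$ times $\det\bigl(P_k(w_j-w_q)\bigr)$, where $P_k$ is the differential polynomial with $v_j^{(k)}=P_k(w_j-w_q)\,v_j$ and leading term $\bigl((w_j-w_q)'\bigr)^k$. Retaining only the leading terms produces a generalized Vandermonde determinant equal, up to non-vanishing factors, to $\prod_{i<j}\bigl((w_i-w_q)'-(w_j-w_q)'\bigr)=\prod_{i<j}(w_i-w_j)'$, which is \emph{not} identically zero precisely because every $w_i-w_j$ is non-constant. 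The conclusion I seek is that this leading term cannot be annihilated by the lower-order contributions, forcing the full determinant to be non-zero and contradicting its vanishing.

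\textbf{Main obstacle.} The one genuinely hard step is the last one: separating the leading generalized-Vandermonde term from the corrections built out of the higher derivatives $w_j'',w_j''',\dots$. When the $w_j$ are polynomials this is elementary, as the exponential factors cannot be balanced against polynomially growing corrections; in general it is exactly the substance of Borel's theorem and must be supplied by value-distribution theory. I would close the argument with Nevanlinna's lemma on the logarithmic derivative, the point being that the correction factors are of small growth relative to the characteristics of the $v_j$, so the leading term dominates. Equivalently, one may invoke Cartan's second main theorem applied to the linearly non-degenerate holomorphic curve $[v_1:\cdots:v_{q-1}]$ in $\mathbb{P}^{q-2}$, which omits the $q$ hyperplanes in general position consisting of the $q-1$ coordinate hyperplanes and the hyperplane $\{\sum_j x_j=0\}$; a non-degenerate curve can avoid at most $q-1$ such hyperplanes, giving the contradiction.
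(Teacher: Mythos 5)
Your proposal is correct in outline, but it takes a genuinely different route from the paper, because the paper does not prove Lemma~\ref{L:lindep} at all: it simply quotes it as a theorem of Green \cite[p.98]{Gr72}, a several-variable version of Borel's theorem, whereas you supply an actual argument. Your added content is the reduction to one variable, and it is sound: choosing a direction $b$ outside the finite union of the proper subspaces $\{b: D_b(g_j-g_k)\equiv0\}$, and then a base point $a$ outside the (finite union of) proper analytic sets $\bigcap_{m\ge1}\{D_b^m(g_j-g_k)=0\}$, correctly produces a line on which every difference remains non-constant while the exponential sum still vanishes identically, so the contrapositive reduces the lemma to the classical one-variable Borel theorem. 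Your closing move --- Cartan's second main theorem applied to the linearly non-degenerate curve $[v_1:\cdots:v_{q-1}]$ in $\mathbb{P}^{q-2}$, which omits the $q$ hyperplanes $x_1,\dots,x_{q-1},\sum_j x_j$ (these are indeed in general position, and a non-degenerate curve can omit at most $(q-2)+1=q-1$ hyperplanes in general position) --- is a standard and valid way to finish; note that once you invoke Cartan, your entire Wronskian paragraph becomes redundant, since minimality of the relation already gives the needed non-degeneracy. Two small imprecisions: the factor $\prod_j(w_j-w_q)'$ you extract from the generalized Vandermonde determinant is not ``non-vanishing'' but merely not identically zero (which is all you need); and your proof, like the paper's, ultimately rests on a deep cited result --- you trade the paper's citation of Green's several-variable theorem for an elementary generic-line reduction plus a citation of one-variable Nevanlinna--Cartan theory, which is close in spirit to how Green's theorem is itself proved. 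What your approach buys is transparency: it isolates exactly where value-distribution theory is indispensable (dimension one) and exhibits the several-variable statement as an elementary corollary.
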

		
\begin{proof}[Proof of Theorem~\ref{T:lindep}]
We argue by contradiction. Suppose that $\cP$ is linearly dependent, so
there exist distinct $P_1,\dots,P_n\in\cP$ and non-zero scalars $\lambda_1,\dots,\lambda_n$
such that $\sum_{j=1}^n\lambda_j P_j=0$. Then the characteristic functions $\phi_{P_j}$ of the $P_j$
satisfy $\sum_{j=1}^n \lambda_j\phi_{P_j}(\xi)=0$ for all $\xi\in\RR^d$.
By assumption, each $\phi_{P_j}$ is the restriction to $\RR^d$ of a nowhere-vanishing
holomorphic function on $\CC^d$. Thus we may write $\lambda_j \phi_{P_j}=\exp g_j|_{\RR^d}$,
where $g_j:\CC^d\to\CC$ is a holomorphic function on $\CC^d$,
and $\sum_{j=1}^n\exp g_j(\xi)=0$ for all $\xi\in\RR^d$.
By the identity principle, a holomorphic function on $\CC^d$ that vanishes on $\RR^d$ 
is identically zero on $\CC^d$. Therefore $\sum_{j=1}^n\exp g_j(\zeta)=0$ for all $\zeta\in\CC^d$.
We now invoke Lemma~\ref{L:lindep}, to deduce that $g_j-g_k$ is constant for some pair of distinct indices $j,k$.
This implies that $\phi_{P_j}/\phi_{P_k}$ is constant on $\RR^d$. 
As $P_j,P_k$ are probability measures,
we have $\phi_{P_j}(0)=\phi_{P_k}(0)=1$. Therefore $\phi_{P_j}=\phi_{P_k}$ on $\RR^d$.
By the uniqueness theorem for characteristic functions, it follows that $P_j=P_k$.
This contradicts the fact that $P_j$ and $P_k$ are distinct measures.
\end{proof}
		
\begin{remark}
Theorem~\ref{T:lindep} actually implies a stronger form of itself, as follows.
Given $A\in M_d(\RR)$ (the set of $d\times d$ matrices)
and $b\in\RR^d$, let us write $P_{A,b}$ 
for the Borel probability measure on $\RR^d$ defined by
\begin{equation}\label{E:affine}
P_{A,b}(B):=P\bigl(\{x\in\RR^d: Ax+b\in B\}\bigr).
\end{equation}
Suppose that the characteristic function of each measure in $\cP$ 
is the restriction to $\RR^d$ of a nowhere-vanishing function on $\CC^d$.
Then, not only is $\cP$ linearly independent, 
but even the distinct measures in the family 
$\{P_{A,b}:P\in\cP, A\in M_d(\RR), b\in\RR^d\}$ are linearly independent.
Indeed, a simple calculation shows that
\[
\phi_{P_{A,b}}(\xi)=e^{ib\cdot \xi}\phi_P(A^T\xi)\quad(\xi\in\RR^d),
\]
so, if $\phi_P$ is the restriction to $\RR^d$ of a nowhere-vanishing
holomorphic function on $\CC^d$, then the same is true of 
$\phi_{P_{A,b}}$. Theorem~\ref{T:lindep} now gives the result.
\end{remark}

	
\section{Gaussian mixtures}\label{S:Gaussian}
	
\subsection{Introduction}
	
A Gaussian measure $P$ on $\RR^d$ is one whose density has the form
\begin{equation}\label{E:density}
\frac{1}{\det(2\pi\Sigma)^{1/2}}\exp\Bigl(-\frac{1}{2}(x-\mu)^T\Sigma^{-1}(x-\mu)\Bigr)
\quad(x\in\RR^d),
\end{equation}
where $\mu\in\mathbb\RR^d$,
and where $\Sigma$ is a real $d\times d$  positive-definite matrix.
A \emph{Gaussian mixture} is a measure on $\RR^d$ that is a finite convex combination of Gaussian measures.
	
Mixtures of multivariate Gaussian distributions have several nice properties. 
In particular, in Titterington et al.\ \cite{TSM85}, 
it is shown that Gaussian kernel density estimators 
can approximate any continuous density given enough kernels (universal consistency). 
It is well known that Gaussian mixtures are  weak*-dense in
the space of all Borel probability measures on  $\RR^d$.
They also have numerous applications in statistics; 
for more on this, see Section~\ref{S:conclusion} below.
	
	
\subsection{A Cram\'er--Wold theorem for Gaussian mixtures}
	
In this section we consider the problem of testing for equality for two Gaussian mixtures
by looking at a finite number of projections.
The basic theorem underlying this approach has two ingredients. 
One is the abstract result Theorem~\ref{T:mixture}. 
The other is a characterization of Cram\'er--Wold systems for  Gaussian measures  in $\RR^d$
(and, more generally, for elliptical distributions) 
established in \cite[Theorems~1 and~2]{FMR23a}, which we now recall.
	
Let $S$ be a set of vectors in $\RR^d$. 
Then the corresponding set of lines $\{\langle x\rangle:x\in S\}$
is a Cram\'er--Wold system for the Gaussian measures in $\RR^d$
if and only if $S$ has the property that
the only real symmetric $d\times d$ matrix~$A$ satisfying $x^TAx=0$ for all $x\in S$
is the zero matrix.
A set $S$ with this  property is called a  \emph{symmetric-matrix uniqueness set} 
(or \emph{sm-uniqueness set} for short).
It was shown in \cite{FMR23a} that an sm-uniqueness set for $\RR^d$ spans $\RR^d$ 
and that it contains at least $(d^2+d)/2$ vectors.

We shall call $S$ a \emph{strong sm-uniqueness set} if every subset of $S$ containing $(d^2+d)/2$ vectors
is an sm-uniqueness set.  A method for generating examples of strong sm-uniqueness sets
will be described in Section~\ref{S:strongsmu} below.

We can now state our Cram\'er--Wold theorem for Gaussian mixtures.
	
\begin{theorem}\label{T:gaussmixture}
Let $P$ and $Q$ each be convex combinations of $m$ Gaussian measures on $\RR^d$. 
Let $S$ be a strong sm-uniqueness set for $\RR^d$ containing at least $(2m-1)(d^2+d-2)/2+1$ vectors.
If $P_{\langle x\rangle}=Q_{\langle x\rangle}$ for all $x\in S$, then $P=Q$.
\end{theorem}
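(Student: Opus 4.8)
The plan is to deduce this from the abstract result, Theorem~\ref{T:mixture}, applied to the family $\cP$ of all Gaussian measures on $\RR^d$ and to the collection of lines $\cH:=\{\langle x\rangle:x\in S\}$. Since $P$ and $Q$ are each convex combinations of $m$ members of $\cP$, and since the hypothesis ``$P_{\langle x\rangle}=Q_{\langle x\rangle}$ for all $x\in S$'' is precisely ``$P_H=Q_H$ for all $H\in\cH$'', the conclusion $P=Q$ will follow immediately once hypotheses~\eqref{i:1} and~\eqref{i:2} of Theorem~\ref{T:mixture} are verified.

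To check~\eqref{i:1}, I would observe that the orthogonal projection of a Gaussian measure onto a line $H=\langle x\rangle$ is again a one-dimensional Gaussian measure: if $u$ is a unit vector spanning $H$ and $P$ has mean $\mu$ and positive-definite covariance $\Sigma$, then $P_H$ is Gaussian with mean $u\cdot\mu$ and variance $u^T\Sigma u>0$. Hence $\{P_H:P\in\cP\}$ is a subfamily of the one-dimensional Gaussian distributions, which are linearly independent by Theorem~\ref{T:normalindep}. As any subfamily of a linearly independent family is linearly independent, hypothesis~\eqref{i:1} holds.

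The heart of the argument, and the step I expect to be the main obstacle, is the verification of~\eqref{i:2}. Here I would use the characterization recalled above, namely that a set of lines $\{\langle x\rangle:x\in T\}$ is a Cram\'er--Wold system for $\cP$ exactly when $T$ is an sm-uniqueness set. Thus I must show that whenever $S$ is partitioned into $2m-1$ subsets, at least one of them is an sm-uniqueness set. The key is a pigeonhole count. Writing $N:=(d^2+d)/2$, so that $(d^2+d-2)/2=N-1$, the cardinality hypothesis reads $|S|\ge(2m-1)(N-1)+1$. If every block of a partition of $S$ into $2m-1$ blocks contained at most $N-1$ vectors, then $|S|\le(2m-1)(N-1)$, which is impossible; hence some block $S_i$ contains at least $N$ vectors. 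Because $S$ is a \emph{strong} sm-uniqueness set, any $N$-element subset of $S_i$---being in particular an $N$-element subset of $S$---is an sm-uniqueness set. Finally, a superset of an sm-uniqueness set is again one (if $x^TAx=0$ for all $x$ in the larger set, then in particular for all $x$ in the smaller set, forcing $A=0$), so the whole block $S_i$ is an sm-uniqueness set. This provides the required Cram\'er--Wold system and establishes~\eqref{i:2}.

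With both hypotheses in place, Theorem~\ref{T:mixture} applies and yields $P=Q$, completing the proof. The only genuinely delicate point is aligning the cardinality bound $(2m-1)(d^2+d-2)/2+1$ with the pigeonhole threshold $N=(d^2+d)/2$; once the identity $(d^2+d-2)/2=N-1$ is noted, the counting is immediate, and everything else is a direct appeal to the results already established.
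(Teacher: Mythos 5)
Your proposal is correct and follows essentially the same route as the paper: apply Theorem~\ref{T:mixture} with $\cP$ the Gaussian measures and $\cH=\{\langle x\rangle:x\in S\}$, verify hypothesis~\eqref{i:1} via Theorem~\ref{T:normalindep}, and verify hypothesis~\eqref{i:2} by the same pigeonhole count combined with the strong sm-uniqueness property. Your explicit remark that a superset of an sm-uniqueness set is again one is a small point the paper glosses over, but otherwise the two arguments coincide.
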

	
\begin{proof}
We apply Theorem~\ref{T:mixture} with $\cP$ equal to the set of Gaussian measures on $\RR^d$
and $\cH:=\{\langle x\rangle: x\in S\}$. All we need to do is to check that $\cP$ and $\cH$ satisfy the
hypotheses \eqref{i:1} and \eqref{i:2} in Theorem~\ref{T:mixture}.
		
Concerning hypothesis~\eqref{i:1}, 
the projection of a multivariate Gaussian measure onto a line
is just a Gaussian measure on that line.
We have already seen in Theorem~\ref{T:normalindep}
that the set of all one-dimensional Gaussian measures is a
linearly independent family. So hypothesis~\eqref{i:1} holds.
		
As for hypothesis~\eqref{i:2}, we argue as follows.
If $S$ is partitioned into $2m-1$ sets, then one of them, $S_0$ say, 
must contain at least $(d^2+d)/2$ vectors
(otherwise $S$ would contain at most $(2m-1)(d^2+d-2)/2$ vectors, contrary to assumption).
As $S$ is a strong sm-uniqueness set, it follows that $S_0$ is an sm-uniqueness set.
In summary, if $S$ is partitioned into $2m-1$ sets, then at least one of them is an sm-uniqueness set.
In other words, if $\cH$ is partitioned into $2m-1$ sets,
then at least one of them is a Cram\'er--Wold system for the family of Gaussian measures. 
Thus hypothesis~\eqref{i:2} holds, and we are done.
\end{proof}
	
	
\subsection{A Cram\'er--Wold theorem for $t$-mixtures}
	
We now establish an analogue of Theorem~\ref{T:gaussmixture}
for mixtures of  multivariate $t$-dist\-ributions,
thereby allowing heavy-tailed distributions. 
A $t$-distribution on $\RR^d$ is a measure with density of the form
\[
f_{\nu,\mu,\Sigma}(x)
=c_{\nu,\mu,\Sigma}\Bigl(1+\frac{(x-\mu)^T\Sigma^{-1}(x-\mu)}{\nu}\Bigr)^{-(\nu+d)/2}\quad(x\in\RR^d),
\]
where $\nu$ is a positive integer, $\mu$ is a vector in $\RR^d$, 
and where $\Sigma$ is a positive-definite $d\times d$ matrix.
Once again, the constant $c_{\nu,\mu,\Sigma}$ is chosen 
to ensure that $\int_{\RR^d}f_{\nu,\mu,\Sigma}(x)\,dx=1$.
	
\begin{theorem}\label{T:tmixture}
Let $P,Q$ each be convex combinations of  $m$
multivariate  $t$-dist\-ributions on $\RR^d$. 
Let $S$ be a strong sm-uniqueness set for $\RR^d$ containing at least $(2m-1)(d^2+d-2)/2+1$ vectors.
If $P_{\langle x\rangle}=Q_{\langle x\rangle}$ for all $x\in S$, then $P=Q$.
\end{theorem}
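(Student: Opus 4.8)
The plan is to follow the template of the proof of Theorem~\ref{T:gaussmixture}, applying the abstract result Theorem~\ref{T:mixture} with $\cP$ taken to be the family of all multivariate $t$-distributions on $\RR^d$ and $\cH:=\{\langle x\rangle:x\in S\}$. Everything then reduces to checking the two hypotheses~\eqref{i:1} and~\eqref{i:2} of Theorem~\ref{T:mixture}.

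For hypothesis~\eqref{i:1}, I would first record that multivariate $t$-distributions are scale mixtures of Gaussians: if $X$ has density $f_{\nu,\mu,\Sigma}$, then $X=\mu+Z/\sqrt{W/\nu}$, where $Z\sim N(0,\Sigma)$ and $W\sim\chi^2_\nu$ are independent. Projecting onto a unit vector $x$ gives $\langle X,x\rangle=\langle\mu,x\rangle+\langle Z,x\rangle/\sqrt{W/\nu}$, and since $\langle Z,x\rangle\sim N(0,x^T\Sigma x)$, the projection $(f_{\nu,\mu,\Sigma})_{\langle x\rangle}$ is again a univariate $t$-distribution, with the same number of degrees of freedom~$\nu$ and with location and scale determined by $\langle\mu,x\rangle$ and $x^T\Sigma x$. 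Consequently the measures in $\{P_{\langle x\rangle}:P\in\cP\}$ all belong to the family $\{f_{\nu,\mu,\sigma}:\nu\in\ZZ^+,\mu\in\RR,\sigma>0\}$, whose distinct members are linearly independent by Theorem~\ref{T:tlindep}. This yields hypothesis~\eqref{i:1}.

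For hypothesis~\eqref{i:2}, the key observation is that a multivariate $t$-distribution is an elliptical distribution, because its density depends on $x$ only through the quadratic form $(x-\mu)^T\Sigma^{-1}(x-\mu)$. By the characterization from \cite[Theorems~1 and~2]{FMR23a} recalled above, any sm-uniqueness set yields a Cram\'er--Wold system for the family of elliptical distributions, and hence a fortiori for its subfamily $\cP$ of multivariate $t$-distributions. The counting argument is then verbatim that of the Gaussian case: if $S$ is partitioned into $2m-1$ sets, then, since $S$ contains more than $(2m-1)(d^2+d-2)/2$ vectors, at least one part contains $(d^2+d)/2$ vectors, and this part is an sm-uniqueness set because $S$ is a strong sm-uniqueness set. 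Translating back to $\cH$, every partition of $\cH$ into $2m-1$ subsets contains a Cram\'er--Wold system for $\cP$, which is hypothesis~\eqref{i:2}.

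The structure is thus identical to the proof of Theorem~\ref{T:gaussmixture}, with Theorem~\ref{T:tlindep} replacing Theorem~\ref{T:normalindep}. The only step requiring genuinely new input is the claim in hypothesis~\eqref{i:1} that a one-dimensional projection of a multivariate $t$-distribution is again a univariate $t$-distribution with unchanged degrees of freedom; this is where I expect the main (albeit modest) obstacle to lie, and the scale-mixture representation above is the cleanest route to it. Once that is established, the elliptical-distribution characterization of \cite{FMR23a} supplies hypothesis~\eqref{i:2} exactly as in the Gaussian setting, and an appeal to Theorem~\ref{T:mixture} completes the argument.
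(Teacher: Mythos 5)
Your proposal is correct and follows essentially the same route as the paper: apply Theorem~\ref{T:mixture} with $\cP$ the multivariate $t$-distributions, obtain hypothesis~\eqref{i:1} from the fact that one-dimensional projections are univariate $t$-distributions together with Theorem~\ref{T:tlindep}, and obtain hypothesis~\eqref{i:2} from the elliptical-distribution characterization in \cite{FMR23a} plus the same counting argument as in the Gaussian case. The only difference is that you supply a justification (the scale-mixture representation) for the projection fact that the paper simply asserts; that added detail is correct and harmless.
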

	
\begin{proof}
This is virtually identical to the proof of Theorem~\ref{T:gaussmixture}.
Once again, we apply Theorem~\ref{T:mixture},
and we need to check that the hypotheses \eqref{i:1}
and \eqref{i:2} of that theorem hold.
		
Hypothesis~\eqref{i:1} holds because the one-dimensional
projection of a multivariate $t$-distribution is a univariate
$t$-distribution, and by Theorem~\ref{T:tlindep}
the family of all univariate $t$-distributions is a linearly independent set.
		
Hypothesis~\eqref{i:2} holds for the same reason that it did before.
Indeed, \cite[Theorem~1]{FMR23a} applies to all elliptical distributions, 
which includes $t$-dist\-ributions as well as Gaussian ones.
\end{proof}

	
\subsection{Strong sm-uniqueness sets}\label{S:strongsmu}
	
Theorems~\ref{T:gaussmixture} and \ref{T:tmixture} beg the question 
as to whether there exist strong sm-uniqueness sets of arbitrarily large cardinality.
The following result provides an affirmative answer,
and  suggests a realistic method for generating them.

\begin{theorem}\label{T:ssmu}
Let $d\ge2$, let $k\ge (d^2+d)/2$, and let 
\[
V:=\Bigl\{(v_1,\dots,v_k)\in (\RR^{d})^k: \{v_1,\dots,v_k\} \text{~is a strong sm-uniqueness set}\Bigr\}.
\]
Then $V$ is an open subset of $\RR^{dk}$, and $\RR^{dk}\setminus V$ has Lebesgue measure zero.
\end{theorem}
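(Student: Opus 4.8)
The plan is to linearize the quadratic condition defining sm-uniqueness sets by means of the quadratic Veronese map, thereby converting the strong sm-uniqueness property into the simultaneous nonvanishing of finitely many determinants, each a polynomial in the coordinates of $(v_1,\dots,v_k)$.

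Write $N:=(d^2+d)/2$ for the dimension of the space of real symmetric $d\times d$ matrices, and let $\phi:\RR^d\to\RR^N$ be the quadratic Veronese map, collecting the $N$ degree-two monomials in the coordinates of $v$. With a suitable linear encoding $a\in\RR^N$ of each symmetric matrix $A$ (its entries on and above the diagonal, appropriately weighted), one has $v^TAv=\langle a,\phi(v)\rangle$. Hence a set of exactly $N$ vectors $\{v_{i_1},\dots,v_{i_N}\}$ is an sm-uniqueness set if and only if $\phi(v_{i_1}),\dots,\phi(v_{i_N})$ are linearly independent in $\RR^N$, that is, the matrix having these as its columns has nonzero determinant. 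Consequently $\{v_1,\dots,v_k\}$ is a strong sm-uniqueness set exactly when, for every $N$-element subset $I\subseteq\{1,\dots,k\}$, the quantity $D_I(v_1,\dots,v_k):=\det[\phi(v_i)]_{i\in I}$ is nonzero.

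Each $D_I$ is a polynomial, hence a continuous function, of $(v_1,\dots,v_k)\in\RR^{dk}$. Therefore $V=\bigcap_I\{D_I\ne0\}$ is a finite intersection of open sets and so is open, which settles the first claim. For the second claim, $\RR^{dk}\setminus V=\bigcup_I\{D_I=0\}$ is a finite union, so it suffices to prove that each zero set $\{D_I=0\}$ is Lebesgue-null; and since the zero set of a polynomial on $\RR^{dk}$ is null unless the polynomial vanishes identically, everything reduces to showing that no $D_I$ is the zero polynomial.

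This last point is the crux. By relabeling it suffices to treat a single $I$, so I must exhibit $N$ vectors $w_1,\dots,w_N\in\RR^d$ with $\phi(w_1),\dots,\phi(w_N)$ spanning $\RR^N$. This follows from the non-degeneracy of the image $\phi(\RR^d)$: were some $a\in\RR^N$ orthogonal to all of $\phi(\RR^d)$, the associated symmetric matrix $A$ would satisfy $v^TAv=0$ for every $v\in\RR^d$, and polarization would force $A=0$, hence $a=0$. Thus $\phi(\RR^d)$ spans the $N$-dimensional space $\RR^N$, so $N$ of its points can be chosen to form a basis; at the corresponding point $D_I\ne0$, whence $D_I\not\equiv0$. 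The polarization step keeps this crux short, and the remaining ingredients --- continuity of determinants and the nullity of the zero locus of a nontrivial polynomial --- are routine.
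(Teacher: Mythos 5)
Your proof is correct and follows essentially the same route as the paper: both encode vectors via the quadratic (Veronese) map into $\RR^{(d^2+d)/2}$, characterize the strong sm-uniqueness property as the simultaneous nonvanishing of finitely many polynomial determinants, and conclude openness by continuity and the measure-zero claim from the fact that a nontrivial polynomial has null zero set. The only difference is one of self-containment: the paper cites the determinant criterion from \cite[Corollary~5]{FMR23a} (Lemma~\ref{L:smu}) and asserts without proof that the determinants are not identically zero, whereas you prove both inline, the latter via a clean polarization argument showing that the Veronese image spans.
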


Thus, if $v_1,\dots,v_k$ are independent random vectors in $\RR^d$
with distributions given by densities on $\RR^d$, then, 
with probability one, the set $\{v_1,\dots,v_k\}$ 
is a strong sm-uniqueness set for $\RR^d$.
To test whether a specific family $\{v_1,\dots,v_k\}$ is a strong sm-uniqueness set,
one can use the following criterion for sm-uniqueness sets, 
which is also an ingredient in the proof of Theorem~\ref{T:ssmu}.

Let $d\ge2$ and set $D:=(d^2+d)/2$.
Given $x=(t_1,\dots,t_d)\in\RR^d$, 
let $\hat{x}$ be the upper triangular $d\times d$ matrix with entries 
$\hat{x}_{ij}:=t_it_j~(1\le i\le j\le d)$, 
but viewed as a column vector in $\RR^D$.

\begin{lemma}\label{L:smu}
A $D$-tuple $(x_1,\dots,x_D)$ of vectors in $\RR^d$ is an sm-uniqueness set 
if and only if $\hat{x}_1,\dots,\hat{x}_D$
are linearly independent vectors in $\RR^D$, 
in other words, if and only if the determinant of the $D\times D$ block matrix
$(\hat{x}_1|\hat{x}_2|\cdots|\hat{x}_D)$
does not vanish. 
\end{lemma}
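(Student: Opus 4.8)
The plan is to reformulate the sm-uniqueness condition as a statement about the rank-one symmetric matrices $x_ix_i^T$, using the Frobenius (trace) inner product on the space of symmetric matrices. Write $\mathrm{Sym}_d$ for the space of real symmetric $d\times d$ matrices, equipped with the inner product $\langle A,B\rangle:=\mathrm{tr}(AB)$, and recall that $\dim\mathrm{Sym}_d=D$. The starting observation is the identity
\[
x^TAx=\mathrm{tr}(Axx^T)=\langle A,xx^T\rangle\qquad(x\in\RR^d,\ A\in\mathrm{Sym}_d),
\]
which recasts each defining equation $x_i^TAx_i=0$ as an orthogonality relation $\langle A,x_ix_i^T\rangle=0$.

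With this in hand, the set $(x_1,\dots,x_D)$ is an sm-uniqueness set precisely when the only $A\in\mathrm{Sym}_d$ orthogonal to all of $x_1x_1^T,\dots,x_Dx_D^T$ is $A=0$. Since $\langle\cdot,\cdot\rangle$ is a genuine (hence non-degenerate) inner product, this is equivalent to saying that $x_1x_1^T,\dots,x_Dx_D^T$ span $\mathrm{Sym}_d$. Because we have exactly $D=\dim\mathrm{Sym}_d$ of these matrices, spanning is equivalent to linear independence, that is, to $\{x_ix_i^T\}$ being a basis of $\mathrm{Sym}_d$.

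To finish, I would transport this to $\RR^D$ via the linear isomorphism $\mathrm{Sym}_d\to\RR^D$ that sends a symmetric matrix to the column vector of its upper-triangular entries. Under this map the matrix $x_ix_i^T$, whose $(j,k)$ entry is the product of the $j$-th and $k$-th coordinates of $x_i$, is sent exactly to $\hat{x}_i$. Hence $\{x_ix_i^T\}$ is linearly independent in $\mathrm{Sym}_d$ if and only if $\{\hat{x}_1,\dots,\hat{x}_D\}$ is linearly independent in $\RR^D$, which in turn holds if and only if $\det(\hat{x}_1|\cdots|\hat{x}_D)\neq0$. Chaining these equivalences yields the lemma.

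The argument is essentially bookkeeping once the trace-inner-product identity is in place; the one point requiring care is the non-degeneracy step, where one must invoke $\langle A,A\rangle=\mathrm{tr}(A^2)=\sum_{i,j}a_{ij}^2>0$ for $A\neq0$ in order to conclude that a trivial orthogonal complement forces spanning. This is also the step that silently absorbs the factor of $2$ attached to off-diagonal entries that would otherwise clutter a direct computation: working with $x_ix_i^T$ rather than a weighted monomial vector keeps the identification with $\hat{x}_i$ exact.
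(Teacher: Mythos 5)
Your proof is correct, but it cannot really be compared step-by-step with the paper's, because the paper does not prove this lemma internally at all: its entire proof is a citation of Corollary~5 of \cite{FMR23a}. Your argument supplies a sound self-contained replacement. The identity $x^TAx=\mathrm{tr}(Axx^T)=\langle A,xx^T\rangle$ turns the defining condition of an sm-uniqueness set into the statement that the rank-one matrices $x_1x_1^T,\dots,x_Dx_D^T$ have trivial orthogonal complement in $\mathrm{Sym}_d$ with the Frobenius inner product; non-degeneracy converts a trivial orthogonal complement into spanning; the dimension count $\dim\mathrm{Sym}_d=D$ converts spanning by $D$ elements into linear independence; and half-vectorization $A\mapsto(a_{jk})_{j\le k}$ is a linear isomorphism carrying $x_ix_i^T$ precisely to $\hat{x}_i$, so independence transfers to $\RR^D$ and hence to the determinant condition. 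Each of these steps is valid, including the handling of possible repetitions among the $x_i$ (repeats kill both spanning and independence, so the equivalence survives). One minor quibble: your closing remark overstates the danger of the ``factor of $2$'' in a direct computation. Writing $x^TAx=\sum_{j\le k}c_{jk}a_{jk}t_jt_k$ with $c_{jj}=1$ and $c_{jk}=2$ for $j<k$ expresses $x^TAx$ as the Euclidean pairing of $\hat{x}$ with the vector $(c_{jk}a_{jk})_{j\le k}$, and since $A\mapsto(c_{jk}a_{jk})_{j\le k}$ is itself a linear isomorphism of $\mathrm{Sym}_d$ onto $\RR^D$, the same duality argument goes through verbatim: fixed positive weights never affect which families have trivial annihilator. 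Your trace formulation is tidier, but the direct route would not have failed.
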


\begin{proof}
This is \cite[Corollary~5]{FMR23a}.
\end{proof}

\begin{proof}[Proof of Theorem~\ref{T:ssmu}]
By Lemma~\ref{L:smu}, the set $\RR^{dk}\setminus V$ can be expressed as the union
\[
\bigcup_{1\le j_1<j_2<\dots <j_D\le k}\Bigl\{((v_1,\dots,v_k)\in(\RR^d)^k:\det\bigl(\hat{v}_{j_1}|\hat{v}_{j_2}|\cdots|\hat{v}_{j_D}\bigr)=0\Bigr\}.
\]
For each choice of $(j_1,j_2,\dots,j_D)$, the map 
$(v_1,\dots,v_k)\mapsto \det(\hat{v}_{j_1}|\hat{v}_{j_2}|\cdots|\hat{v}_{j_D})$ is a polynomial
in the entries of $(v_1,\dots,v_k)$ that is not identically zero, so its zero set is a closed subset 
of $\RR^{dk}$ of Lebesgue measure zero. As $\RR^{dk}\setminus V$ is a finite union of such sets,
it too is a  closed subset of $\RR^{dk}$ of Lebesgue measure zero.
\end{proof}


\subsection{Two examples}

In this subsection, we examine the
rationale for the number $(2m-1)(d^2+d-2)/2+1$ appearing in
Theorems~\ref{T:gaussmixture} and \ref{T:tmixture}.
In particular, we explain the quadratic growth in $d$ and the linear growth
in $m$.

The quadratic growth in $d$ is easily justified by the
following theorem,  essentially taken from \cite{FMR23a}.
We consider the special case $m=1$ 
(so that $P,Q$ are no longer mixtures). 
Thus $(2m-1)(d^2+d-2)/2+1=(d^2+d)/2$.

\begin{theorem}
Let $d\ge2$ and let $S$ be any set of vectors in $\RR^d$ containing strictly fewer than $(d^2+d)/2$ vectors. Then there exist Gaussian measures $P,Q$ on $\RR^d$ such that $P_{\langle x\rangle}=Q_{\langle x\rangle}$ for all $x\in S$, but $P\ne Q$.
\end{theorem}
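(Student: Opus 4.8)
The plan is to realize the failure of the sm-uniqueness condition directly as a pair of distinct centered Gaussian measures sharing all the prescribed one-dimensional projections. The computational fact I would record first is that, for a centered Gaussian measure on $\RR^d$ with covariance $\Sigma$, its projection onto the line $\langle x\rangle$ is the centered Gaussian on that line with variance $x^T\Sigma x/\|x\|^2$. Consequently, two centered Gaussians with covariances $\Sigma_1$ and $\Sigma_2$ have identical projections onto every line $\langle x\rangle$ with $x\in S$ precisely when $x^T(\Sigma_1-\Sigma_2)x=0$ for all $x\in S$. This reduces the theorem to producing two distinct positive-definite matrices whose difference is a nonzero symmetric matrix annihilated by all the quadratic forms $x\mapsto x^TAx$, $x\in S$.

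To obtain such a matrix $A$, I would use a dimension count. The space of real symmetric $d\times d$ matrices has dimension $D:=(d^2+d)/2$, and each condition $x^TAx=0$ imposes a single linear constraint on $A$. Since $S$ has strictly fewer than $D$ elements, the solution space of these constraints has positive dimension, so a nonzero symmetric $A$ with $x^TAx=0$ for all $x\in S$ exists. Equivalently, this just records that $S$ fails to be an sm-uniqueness set, which is forced by the lower bound on the cardinality of sm-uniqueness sets recalled earlier in this section.

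With $A$ in hand, I would set $\Sigma_1:=I+\varepsilon A$ and $\Sigma_2:=I-\varepsilon A$ and choose $\varepsilon>0$ small. The only point requiring care --- and the nearest thing to an obstacle --- is ensuring that $\Sigma_1$ and $\Sigma_2$ both remain positive-definite, so that $P$ and $Q$ (the centered Gaussians with these covariances) are genuine, nondegenerate Gaussian measures as demanded by the definition. This is handled by continuity of eigenvalues: for $\varepsilon$ small enough the eigenvalues of $I\pm\varepsilon A$ stay close to $1$ and hence positive. Finally, $\Sigma_1-\Sigma_2=2\varepsilon A\neq0$ guarantees $P\neq Q$, since a centered Gaussian determines its covariance, while $x^T(\Sigma_1-\Sigma_2)x=2\varepsilon\,x^TAx=0$ for all $x\in S$ yields $P_{\langle x\rangle}=Q_{\langle x\rangle}$ by the first step. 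This completes the construction and shows the bound $(d^2+d)/2$ cannot be lowered when $m=1$.
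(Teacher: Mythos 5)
Your proof is correct, and it turns on the same conceptual pivot as the paper's: a set of fewer than $(d^2+d)/2$ vectors cannot be an sm-uniqueness set, and the failure of sm-uniqueness can be converted into a pair of distinct Gaussian measures with matching line projections. The difference is one of self-containedness. The paper's proof consists of two citations to \cite{FMR23a}: its Proposition~2 supplies the cardinality bound (fewer than $(d^2+d)/2$ vectors is never an sm-uniqueness set), and its Theorem~2 supplies the conversion of a non-sm-uniqueness set into two distinct Gaussians with equal projections. You re-derive both ingredients from scratch: the cardinality bound via the dimension count in the space of symmetric matrices (each condition $x^TAx=0$ is one linear constraint on a space of dimension $D=(d^2+d)/2$, so fewer than $D$ constraints leave a nonzero solution $A$), and the conversion via the explicit perturbation $\Sigma_{1}=I+\varepsilon A$, $\Sigma_{2}=I-\varepsilon A$, with $\varepsilon$ small enough that both matrices remain positive-definite, together with the observation that the projection of a centered Gaussian onto $\langle x\rangle$ is determined by $x^T\Sigma x$. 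Your version buys a fully explicit, self-contained construction that makes visible exactly where the threshold $(d^2+d)/2$ comes from (the dimension of the space of symmetric matrices, i.e.\ the degrees of freedom of a covariance matrix); the paper's version buys brevity by outsourcing both steps to results already quoted earlier in the same section.
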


\begin{proof}
By \cite[Proposition~2]{FMR23a}, the fact that $S$ has fewer
than $(d^2+d)/2$ elements implies that it is not an sm-uniqueness set. The existence of $P$ and $Q$ now follows from 
\cite[Theorem~2]{FMR23a}.
\end{proof}

This result explains not only the  presence of the factor
$(d^2+d)/2$ in Theorem~\ref{T:gaussmixture}, but also the need
for considering of the notion of sm-uniqueness set. 
We remark in passing that the quantity $(d^2+d)/2$ arises here because it is the number of degrees of freedom of a $d\times d$
covariance matrix. This is discussed in more detail in 
\cite[Section~2]{FMR23a}.

Now we turn to the dependence on $m$,
namely the number of Gaussian (or $t$-)
distributions making up the mixtures $P$ and $Q$.
In our first example, we take $d=2$ and $m=2$.
Though apparently modest, we believe that this example 
is instructive.

\begin{example}\label{Ex:m=2}
Let $S$ be the subset of $\RR^2$ defined by
\[
S:=\Bigl\{
{1\choose0},{0\choose1},{1\choose1},{1\choose-1}\Bigr\}.
\]
Then $S$ is a strong sm-uniqueness set,
and there exist measures $P,Q$ on $\RR^2$, 
each a convex combination of two Gaussian measures, 
such that $P_{\langle x\rangle}=Q_{\langle x\rangle}$
for all $x\in S$, but $P\ne Q$.
\end{example}

\begin{proof}
First of all, we remark that, by \cite[Corollary~3]{FMR23a},
a set of three vectors in $\RR^2$ is an sm-uniqueness set
provided that no two vectors in the set are multiples of one another.
It follows that $S$ is a strong sm-uniqueness set.

Now set 
\[
\Sigma_1:=
\begin{pmatrix}
2 &0\\
0 &1
\end{pmatrix},
\quad
\Sigma_2:=
\begin{pmatrix}
1 &1\\
1 &2
\end{pmatrix},
\quad
\Sigma_1':=
\begin{pmatrix}
2 &1\\
1 &1
\end{pmatrix},
\quad
\Sigma_2':=
\begin{pmatrix}
1 &0\\
0 &2
\end{pmatrix}.
\]
Let $P:=(P_1+P_2)/2$, where $P_j$ is Gaussian
with mean $0$ and covariance $\Sigma_j$,
and let $Q:=(Q_1+Q_2)/2$, where $Q_j$ is Gaussian
with mean $0$ and covariance $\Sigma_j'$.
Clearly $P\ne Q$.

Elementary calculations show that, if 
$x={1\choose0}$ or ${0\choose1}$, then
\[
x^T\Sigma_1 x=x^T\Sigma_1'x
\quad\text{and}\quad
x^T\Sigma_2 x=x^T\Sigma_2'x,
\]
while if $x={1\choose1}$ or ${1\choose-1}$, then
\[
x^T\Sigma_1 x=x^T\Sigma_2'x
\quad\text{and}\quad
x^T\Sigma_2 x=x^T\Sigma_1'x.
\]
Note that, in this last display, $\Sigma_1'$ and $\Sigma_2'$ have changed places. However, since $P$ and $Q$ are averages of the $P_j$
and $Q_j$ respectively, we do have
$P_{\langle x\rangle}=Q_{\langle x\rangle}$ for all $x\in S$.
\end{proof}

If $d=2$ and $m=2$, then $(2m-1)(d^2+d-2)/2+1=7$.
Example~\ref{Ex:m=2} shows that, in Theorem~\ref{T:gaussmixture},
this number cannot be replaced by $4$. However, calculations
similar to those carried out in Example~\ref{Ex:m=2} indicate that $5$ lines do suffice. Thus, in this case at least, the 
bound $(2m-1)(d^2+d-2)/2+1$  is not sharp.

Despite the lack of sharpness, the linear growth in $m$
is correct. To see this, we consider a second example 
where,
instead of varying the covariance matrices, we vary the means.
This example is inspired by one for discrete measures presented in \cite{He56}.

\begin{example}\label{Ex:m>1}
Let $m\ge2$ and define a subset $S$ of $\RR^2$ by
\[
S:=\Bigl\{\begin{pmatrix}
\cos(j\pi/m)\\
\sin(j\pi/m)
\end{pmatrix}: 
j=1,\dots,m
\Bigr\}.
\]
Then $S$ is a strong sm-uniqueness set,
and there exist measures $P,Q$ on $\RR^2$, each a convex combination
of $m$ Gaussian measures, such that $P_{\langle x\rangle}=Q_{\langle x\rangle}$ for all $x\in S$, but $P\ne Q$.
\end{example}

\begin{proof}
As in the previous example,
a set of three vectors in $\RR^2$ is an sm-uniqueness set
provided that no two vectors in the set are multiples of one another.
It follows that $S$ is a strong sm-uniqueness set.

For $j=1,\dots,m$, set 
\[
\mu_j:=
\begin{pmatrix}
\cos((2j-1/2)\pi/m)\\
\sin((2j-1/2)\pi/m)
\end{pmatrix}
\quad\text{and}\quad
\mu_j':=
\begin{pmatrix}
\cos((2j+1/2)\pi/m)\\
\sin((2j+1/2)\pi/m)
\end{pmatrix}.
\]
Thus the sequence $\mu_1,\mu_1',\mu_2,\mu_2',\dots,\mu_m,\mu_m'$ traces out, in order, the vertices of a  regular $(2m)$-gon
centred at the origin. The lines defined by the set $S$ are precisely
the bisectors of this $(2m)$-gon passing through the midpoints of the edges.
For $j=1,\dots,m$, let $P_j,Q_j$ be  Gaussian measures with means at $\mu_j,\mu_j'$ respectively, all of them having covariance matrix equal to the identity. Finally, let
$P=(1/m)\sum_{j=1}^mP_j$ and $Q:=(1/m)\sum_{j=1}^mQ_j$.
Then it is geometrically clear that $P_{\langle x\rangle}=Q_{\langle x\rangle}$ for all $x\in S$, but evidently $P\ne Q$.
\end{proof}

This example works because of the symmetry
of the set-up. Though such symmetry is rather exceptional, there is
nothing in Theorem~\ref{T:gaussmixture} to exclude this
possibility, and so it needs to be taken into account
when deciding the number of lines to be used. 
Example~\ref{Ex:m>1} shows that (when $d=2$), 
this is number has to be 
at least $m+1$. This explains the linear growth in $m$
in the bound $(2m-1)(d^2+d-2)/2+1$.

	
\section{Concluding remarks}\label{S:conclusion}

Gaussian-mixture models have been shown to be very effective in modeling different real data, see for instance Titterington et al.\ \cite{TSM85} for a deep study of their properties. They provide flexible and general models, and relevant applications can be found in the literature in different fields like density estimation, machine learning and clustering,  among others. The estimation of these models is quite involved, in particular for high-dimensional data, 
typically using Markov-chain Monte-Carlo methods in a Bayesian framework.
The Cram\'er--Wold device has a role to play in this circle of ideas, in particular through
Theorems~\ref{T:gaussmixture} and \ref{T:tmixture}. 
This is explored in detail in the companion paper \cite{FMR25}.	

\bibliographystyle{amsplain}
\bibliography{CWmixtures}

\providecommand{\bysame}{\leavevmode\hbox to3em{\hrulefill}\thinspace}
\providecommand{\MR}{\relax\ifhmode\unskip\space\fi MR }
\providecommand{\MRhref}[2]{%
  \href{http://www.ams.org/mathscinet-getitem?mr=#1}{#2}
}
\providecommand{\href}[2]{#2}
\begin{thebibliography}{10}

\bibitem{BMR97}
C.~B\'{e}lisle, J.-C. Mass\'{e}, and T.~Ransford, \emph{When is a probability
  measure determined by infinitely many projections?}, Ann. Probab. \textbf{25}
  (1997), no.~2, 767--786. \MR{1434125}

\bibitem{Bo1897}
E.~Borel, \emph{{Sur les z\'eros des fonctions enti\`eres}}, Acta Math.
  \textbf{20} (1897), 357--396.

\bibitem{CW36}
H.~Cram\'{e}r and H.~Wold, \emph{Some theorems on distribution functions}, J.
  London Math. Soc. \textbf{11} (1936), no.~4, 290--294. \MR{1574927}

\bibitem{CFR07}
J.~A. Cuesta-Albertos, R.~Fraiman, and T.~Ransford, \emph{A sharp form of the
  {C}ram\'{e}r-{W}old theorem}, J. Theoret. Probab. \textbf{20} (2007), no.~2,
  201--209. \MR{2324526}

\bibitem{FMR23a}
R.~Fraiman, L.~Moreno, and T.~Ransford, \emph{A {C}ram\'{e}r-{W}old theorem for
  elliptical distributions}, J. Multivariate Anal. \textbf{196} (2023), Paper
  No. 105176, 13. \MR{4562068}

\bibitem{FMR23b}
\bysame, \emph{A quantitative {H}eppes theorem and multivariate {B}ernoulli
  distributions}, J. R. Stat. Soc. Ser. B. Stat. Methodol. \textbf{85} (2023),
  no.~2, 293--314.

\bibitem{FMR24}
\bysame, \emph{Application of the {C}ram\'er-{W}old theorem to testing for
  invariance under group actions}, TEST \textbf{33} (2024), no.~2, 379--399.
  \MR{4756641}

\bibitem{FMR25}
\bysame, \emph{Two statistical problems for multivariate mixture
  distributions}, preprint, arXiv:2503.12147, 2025.

\bibitem{Gi55}
W.~M. Gilbert, \emph{Projections of probability distributions}, Acta Math.
  Acad. Sci. Hungar. \textbf{6} (1955), 195--198. \MR{70868}

\bibitem{Gr72}
M.~L. Green, \emph{Holomorphic maps into complex projective space omitting
  hyperplanes}, Trans. Amer. Math. Soc. \textbf{169} (1972), 89--103.
  \MR{308433}

\bibitem{GJ20}
K.~Gr\"{o}chenig and P.~Jaming, \emph{The {C}ram\'{e}r-{W}old theorem on
  quadratic surfaces and {H}eisenberg uniqueness pairs}, J. Inst. Math. Jussieu
  \textbf{19} (2020), no.~1, 117--135. \MR{4045081}

\bibitem{He56}
A.~Heppes, \emph{On the determination of probability distributions of more
  dimensions by their projections}, Acta Math. Acad. Sci. Hungar. \textbf{7}
  (1956), 403--410. \MR{85646}

\bibitem{HMG06}
H.~Holzmann, A.~Munk, and T.~Gneiting, \emph{Identifiability of finite mixtures
  of elliptical distributions}, Scand. J. Statist. \textbf{33} (2006), no.~4,
  753--763. \MR{2300914}

\bibitem{HS78}
R.~A. Horn and F.~W. Steutel, \emph{On multivariate infinitely divisible
  distributions}, Stochastic Process. Appl. \textbf{6} (1977/78), no.~2,
  139--151. \MR{467875}

\bibitem{LR79}
R.~G. Laha and V.~K. Rohatgi, \emph{Probability theory}, Wiley Series in
  Probability and Mathematical Statistics, John Wiley \& Sons, New
  York-Chichester-Brisbane, 1979. \MR{534143}

\bibitem{LS52}
E.~Lukacs and O.~Sz\'{a}sz, \emph{On analytic characteristic functions},
  Pacific J. Math. \textbf{2} (1952), 615--625. \MR{51461}

\bibitem{Re52}
A.~R\'{e}nyi, \emph{On projections of probability distributions}, Acta Math.
  Acad. Sci. Hungar. \textbf{3} (1952), 131--142. \MR{53422}

\bibitem{Te63}
H.~Teicher, \emph{Identifiability of finite mixtures}, Ann. Math. Statist.
  \textbf{34} (1963), 1265--1269. \MR{155376}

\bibitem{TSM85}
D.~M. Titterington, A.~F.~M. Smith, and U.~E. Makov, \emph{Statistical analysis
  of finite mixture distributions}, Wiley Series in Probability and
  Mathematical Statistics: Applied Probability and Statistics, John Wiley \&
  Sons, Ltd., Chichester, 1985. \MR{838090}

\bibitem{YS68}
S.~J. Yakowitz and J.~D. Spragins, \emph{On the identifiability of finite
  mixtures}, Ann. Math. Statist. \textbf{39} (1968), 209--214. \MR{224204}

\end{thebibliography}

\end{document}